\newcommand{\Ham}{\mathrm{Ham}}
\newcommand{\Lee}{\mathrm{Lee}}
\DeclareMathOperator{\GL}{GL}
\DeclareMathOperator{\cir}{circ}
\theoremstyle{plain}
\newtheorem{lma}{Lemma}[section]
\theoremstyle{definition}
\newtheorem{dfn}{Definition}[section]
\newcommand{\F}{\mathbb{F}}
\newcommand{\N}{\mathbb{N}}
\newcommand{\Z}{\mathbb{Z}}
\providecommand{\abs}[1]{\lvert#1\rvert}
\begin{document}

\title{Double and bordered $\alpha$-circulant self-dual codes over finite commutative chain rings}

\author{Michael Kiermaier}
\address{Michael Kiermaier\\Mathematical Department\\University of Bayreuth\\D-95440 Bayreuth\\Germany}
\email{michael.kiermaier@uni-bayreuth.de}
\urladdr{http://www.mathe2.uni-bayreuth.de/michaelk/}

\author{Alfred Wassermann}
\address{Alfred Wassermann\\Mathematical Department\\University of Bayreuth\\D-95440 Bayreuth\\Germany}
\email{alfred.wassermann@uni-bayreuth.de}
\urladdr{http://did.mat.uni-bayreuth.de/~alfred/home/index.html}

\keywords{linear code over rings, self-dual code, circulant matrix, finite chain ring}

\begin{abstract}
In this paper we investigate codes over finite commutative rings $R$, whose generator matrices are built from $\alpha$-circulant matrices.
For a non-trivial ideal $I<R$ we give a method to lift such codes over $R/I$ to codes over $R$, such that some isomorphic copies are avoided.

For the case where $I$ is the minimal ideal of a finite chain ring we refine this lifting method:
We impose the additional restriction that lifting preserves self-duality. It will be shown that this can be achieved by solving a linear system of equations over a finite field.

Finally we apply this technique to $\Z_4$-linear double nega-circulant and bordered circulant self-dual codes. We determine the best minimum Lee distance of these codes up to length $64$.
\end{abstract}

\maketitle

\section{$\alpha$-circulant matrices}
\label{section:circulant}
In this section, we give some basic facts on $\alpha$-circulant matrices, compare with \cite[chapter~16]{MacWilliams-Sloane-1977-The_Theory_of_Error_Correcting_Codes}, where some theory of circulant matrices is given, and with \cite[page~84]{Davis-2004-Circulant_Matrices}, where $\alpha$-circulant matrices are called $\{k\}$-circulant.

\begin{dfn}
Let $R$ be a commutative ring, $k$ a natural number and $\alpha\in R$.
A $(k\times k)$-matrix $A$ is called \emph{$\alpha$-circulant}, if $A$ has the form
\[
\begin{pmatrix}
a_0            & a_1            & a_2        & \hdots & a_{k-2}        & a_{k-1} \\
\alpha a_{k-1} & a_0            & a_1        & \hdots & a_{k-3}        & a_{k-2} \\
\alpha a_{k-2} & \alpha a_{k-1} & a_0        & \hdots & a_{k-4}        & a_{k-3} \\
\vdots         & \vdots         & \vdots     &        & \vdots         & \vdots  \\
\alpha a_1     & \alpha a_2     & \alpha a_3 & \hdots & \alpha a_{k-1} & a_0
\end{pmatrix}
\]
with $a_i \in R$ for $i \in \{0,\ldots,k-1\}$.
For $\alpha = 1$, $A$ is called \emph{circulant}, for $\alpha = -1$, $A$ is called \emph{nega-circulant} or \emph{skew-circulant}, and for $\alpha = 0$, $A$ is called \emph{semi-circulant}.

An $\alpha$-circulant matrix $A$ is completely determined by its first row $v = (a_0, a_1,\ldots,a_{k-1})\in R^k$. We denote $A$ by $\cir_\alpha(v)$ and say that $A$ is the $\alpha$-circulant matrix \emph{generated} by $v$.
\end{dfn}
In the following, $\alpha$ usually will be a unit or even $\alpha^2 = 1$.

We define $T_{\alpha}=\cir_\alpha(0,1,0,\ldots,0)$, that is
\[
T_\alpha = \begin{pmatrix}
       & 1 &   &        &   \\
       &   & 1 &        &   \\
       &   &   & \ddots &   \\
       &   &   &        & 1 \\
\alpha &   &   &        & 
\end{pmatrix}
\]

Using $T_\alpha$, there is another characterization of an $\alpha$-circulant matrix:
A matrix $A\in R^{k\times k}$ is $\alpha$-circulant iff $A T_\alpha = T_\alpha A$.
This is seen directly by comparing the components of the two matrix products.

In the following it will be useful to identify the generating vectors $(a_0, a_1, \ldots, a_{k-1})\in R^n$ with the polynomials $\sum_{i=0}^{k-1}a_i x^i \in R[x]$ of degree at most $k-1$, which again can be seen as a set of representatives of the $R$-algebra $R[x]/(x^k - \alpha)$. Thus, we get an injective mapping $\cir_\alpha: R[x]/(x^k - \alpha)\rightarrow R^{k\times k}$.

Obviously $\cir_\alpha(1) = I_k$, which denotes the $(k\times k)$-unit matrix, $\cir_\alpha(\lambda f) = \lambda\cir_\alpha(f)$ and $\cir_\alpha(f + g) = \cir_\alpha(f) + \cir_\alpha(g)$ for all scalars $\lambda\in R$ and all $f$ and $g$ in $R[x]/(x^k - \alpha)$.
Furthermore, it holds $\cir_\alpha(e_i) = \cir_\alpha(x^i) = T_\alpha^i$ for all $i\in\{0,\ldots,k-1\}$ and $\cir_\alpha(x^k) = \cir_\alpha(\alpha) =  \alpha I_k = T_\alpha^k$, where $e_i$ denotes the $i$th\footnote{Throughout this article, counting starts at 0. Accordingly, $\N = \{0,1,2,\ldots\}$} unit vector.
So we have $\cir_\alpha(x^i x^j) = \cir_\alpha(x^i)\cir_\alpha(x^j)$ for all $\{i,j\}\subset\N$.
By linear extension it follows that $\cir_\alpha$ is a monomorphism of $R$-algebras.
Hence the image of $\cir_\alpha$, which is the set of the $\alpha$-circulant $(k\times k)$-matrices over $R$, forms a commutative subalgebra of the $R$-algebra $R^{k\times k}$ and it is isomorphic to the $R$-algebra $R[x]/(x^k - \alpha)$. Especially, we get $\cir_\alpha(a_0,\ldots,a_{k-1}) = \sum_{i=0}^{k-1} a_i T_\alpha^i$.

\section{Double $\alpha$-circulant and bordered $\alpha$-circulant codes}
\label{section:circ_code}
\begin{dfn}
Let $R$ be a commutative ring and $\alpha\in R$. Let $A$ be an $\alpha$-circulant matrix.
A code generated by a generator matrix
\[
(I_k \mid A)
\]
is called \emph{double $\alpha$-circulant code}.
A code generated by a generator matrix
\[
\begin{pmatrix}I_k & \begin{array}{cc}\beta & \gamma\cdots\gamma\\ \begin{array}{c}\delta\\\vdots\\\delta\end{array} & A\end{array}\end{pmatrix}
\]
with $\{\beta,\gamma,\delta\}\subset R\}$ is called \emph{bordered $\alpha$-circulant code}.
The number of rows of such a generator matrix is denoted by $k$, and the number of columns is denoted by $n = 2k$.
\end{dfn}

As usual, two codes $C_1$ and $C_2$ are called \emph{equivalent} or \emph{isomorphic}, if there is a monomial transformation that maps $C_1$ to $C_2$.

\begin{dfn}
Let $R$ be a commutative ring and $k\in\N$.
The symmetric group over the set $\{0,\ldots,k-1\}$ is denoted by $S_k$.
For a permutation $\sigma\in S_k$ the permutation matrix $S(\sigma)$ is defined as $S_{ij} = \delta_{i,\sigma(j)}$, where $\delta$ is the Kronecker delta.
An invertible matrix $M\in\GL(k,R)$ is called \emph{monomial}, if $M = S(\sigma)D$ for a permutation $\sigma\in S_k$ and an invertible diagonal matrix $D$.
The decomposition of a monomial matrix into the permutational and the diagonal matrix part is unique.
\end{dfn}

Let $\mathfrak{M} = \mathfrak{M}(k,R,\alpha)$ be the set of all pairs $(N,M)$ of monomial $(k\times k)$-matrices $M$ and $N$ over $R$, such that for each $\alpha$-circulant matrix $A\in R^{k\times k}$, the matrix $N^{-1} A M$ is again $\alpha$-circulant.
An element $(N,M)$ of $\mathfrak{M}$ can be interpreted as a mapping $R^{k\times k}\rightarrow R^{k\times k}$, $A \mapsto N^{-1} A M$.
The composition of mappings implies a group structure on $\mathfrak{M}$, and $\mathfrak{M}$ operates on the set of all $\alpha$-circulant matrices.

Now let $(N,M)\in\mathfrak{M}$.
The codes generated by $(I \mid A)$ and by $(I \mid N^{-1} A M)$ are equivalent, since
\[
N^{-1} (I \mid A) \begin{pmatrix}N & 0 \\ 0 & M\end{pmatrix} = (I \mid N^{-1} A M)
\]
and the matrix $\begin{pmatrix}N & 0 \\ 0 & M\end{pmatrix}$ is monomial.
Thus, $\mathfrak{M}$ also operates on the set of all double $\alpha$-circulant generator matrices.

In general $\mathfrak{M}$-equivalence is weaker than the code equivalence:
For example the vectors $v = (1111101011011010)\in\Z_2^{16}$ and $w = (1110010011100000)\in\Z_2^{16}$ generate two equivalent binary double circulant self-dual $[32,16]$-codes.
But since the number of zeros in $v$ and $w$ is different, the two circulant matrices generated by $v$ and $w$ cannot be in the same $\mathfrak{M}$-orbit.

\section{Monomial transformations of $\alpha$-circulant matrices}
\label{section:monomial}
Let $R$ be a commutative ring, $k\in\N$ and $\alpha\in R$ a unit.
In this section we give some elements $(N,M)$ of the group $\mathfrak{M} = \mathfrak{M}(R,k,\alpha)$ defined in the last section.
In part they can be deduced from \cite[chapter~16, \S6, problem~7]{MacWilliams-Sloane-1977-The_Theory_of_Error_Correcting_Codes}.

Quite obvious elements of $\mathfrak{M}$ are $(I_k, T_\alpha)$, $(T_\alpha, I_k)$, $(I_k, D)$ and $(D, I_k)$, where $D$ denotes an invertible scalar matrix.

For certain $\alpha$ further elements of $\mathfrak{M}$ are given by the following lemma, which is checked by a calculation:
\begin{lma}
Let $\alpha\in R$ with $\alpha^2 = 1$ and $s\in\{0,\ldots,k-1\}$ with $\gcd(s,k) = 1$.
Let $\sigma = (i\mapsto si\mod k)\in S_k$.
We define $D$ as the diagonal matrix which has $\alpha^{(s+1)i + \lfloor si/k\rfloor}$ as $i$-th diagonal entry, and we define the monomial matrix $M = S(\sigma) D$.
Then
\[
    (M,M)\in\mathfrak{M}
\]
More specifically: Let $f\in R[x]/(x^k-\alpha)$.
It holds:\[
M^{-1} \cir_\alpha(f) M = \cir_\alpha(f((\alpha x)^s))
\]
\end{lma}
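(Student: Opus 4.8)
The plan is to exploit that $\cir_\alpha$ is an $R$-algebra monomorphism whose image is exactly the set of $\alpha$-circulant matrices (as recorded at the end of Section~\ref{section:circulant}), together with the fact that conjugation $X\mapsto M^{-1}XM$ is an $R$-algebra automorphism of $R^{k\times k}$. Writing $f=\sum_{i=0}^{k-1}a_ix^i$ and using $\cir_\alpha(f)=\sum_i a_iT_\alpha^i$, the additivity and multiplicativity of conjugation reduce the whole statement to its action on the single generator $T_\alpha=\cir_\alpha(x)$: once I know
\[
M^{-1}T_\alpha M=\cir_\alpha((\alpha x)^s),
\]
I obtain $M^{-1}\cir_\alpha(f)M=\sum_i a_i\bigl(M^{-1}T_\alpha M\bigr)^i=\sum_i a_i\,\cir_\alpha\bigl(((\alpha x)^s)^i\bigr)=\cir_\alpha\bigl(f((\alpha x)^s)\bigr)$, which is the asserted formula. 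This also makes the membership $(M,M)\in\mathfrak{M}$ automatic, since the right-hand side is $\alpha$-circulant by construction; no separate argument for membership is then needed.

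It therefore remains to verify the single identity $M^{-1}T_\alpha M=\cir_\alpha((\alpha x)^s)=\alpha^sT_\alpha^s$, which I would rewrite as $T_\alpha M=M\,\alpha^sT_\alpha^s$ so as to avoid inverting $M$. The cleanest route is to evaluate both sides on the standard basis $e_0,\dots,e_{k-1}$. Here $M=S(\sigma)D$ acts by $Me_j=d_j\,e_{\sigma(j)}$ with $\sigma(j)=sj\bmod k$ and $d_j=\alpha^{(s+1)j+\lfloor sj/k\rfloor}$, while $T_\alpha$ is the cyclic shift $T_\alpha e_j=\alpha^{[j=0]}e_{(j-1)\bmod k}$ that picks up a factor $\alpha$ whenever the index wraps past $0$. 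Composing these two actions, both $M^{-1}T_\alpha M$ and $\alpha^sT_\alpha^s$ send $e_j$ to a single scaled basis vector, so the verification splits, for each $j$, into matching (i) the resulting basis index and (ii) the accompanying power of $\alpha$. The condition $\gcd(s,k)=1$ guarantees that $\sigma\in S_k$ is a genuine permutation and is what makes the index comparison in (i) go through.

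The main obstacle is precisely the bookkeeping in part (ii), the powers of $\alpha$. Factors of $\alpha$ enter from three sources --- the wrap-around in $T_\alpha$, the repeated wrap-arounds in $T_\alpha^s$, and the reduction $sj\bmod k$ hidden inside $\sigma$ --- and each is naturally counted by a floor function. The exponent $(s+1)j+\lfloor sj/k\rfloor$ defining $D$ is calibrated exactly so that, after collapsing all exponents modulo $2$ by means of $\alpha^2=1$, the contributions of $T_\alpha$, of $T_\alpha^s$ and of $D$ cancel columnwise on the two sides. Carrying out this comparison for every $j$, if convenient by separating the indices that wrap from those that do not, completes the verification; there is no conceptual difficulty beyond keeping the floor terms consistent.
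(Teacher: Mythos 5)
Your reduction to the single generator is sound: since $\cir_\alpha$ is an $R$-algebra monomorphism and conjugation by $M$ is an algebra automorphism of $R^{k\times k}$, the lemma would indeed follow from the one identity on $T_\alpha=\cir_\alpha(x)$, and membership of $(M,M)$ in $\mathfrak{M}$ then comes for free. Since the paper itself offers no proof beyond the remark that the lemma ``is checked by a calculation'', this framing is a genuine improvement in structure. The problem is that you never perform the calculation to which you have reduced everything, and it does not ``go through'' as you assert. With the convention you yourself adopt, $Me_j=d_j e_{\sigma(j)}$ with $\sigma(j)=sj\bmod k$, compare the two sides of $T_\alpha M=M\,\alpha^sT_\alpha^s$ on $e_j$: the left side is supported on the basis vector with index $(sj-1)\bmod k$, while the right side is supported on index $s\bigl((j-s)\bmod k\bigr)\bmod k=(sj-s^2)\bmod k$. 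These agree for all $j$ only when $s^2\equiv 1\pmod k$, not under the mere hypothesis $\gcd(s,k)=1$; so your claim that coprimality makes the index comparison in (i) go through is false. Concretely, for $k=5$, $s=2$, $\alpha=1$ one has $D=I_5$, $M=S(\sigma)$, and a direct check gives $M^{-1}T_1M=T_1^3\neq T_1^2=\cir_1(x^2)$: conjugation in this orientation realizes the substitution with multiplier $s^{-1}\bmod k$ (here $3$), not $s$.

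The $\alpha$-power bookkeeping in (ii), which you likewise wave through as ``calibrated exactly'', has the same defect. Take $s=1$ (coprime to every $k$), $k$ even, $\alpha=-1$ over $R=\Z_4$: then $\sigma=\mathrm{id}$ and the printed exponents $(s+1)i+\lfloor si/k\rfloor=2i$ give $D=I_k$, hence $M=I_k$, while the asserted formula with $f=x$ would force $T_\alpha=\cir_\alpha(\alpha x)=\alpha T_\alpha$, which is false since $2T_\alpha\neq 0$ in $\Z_4^{k\times k}$. So the identity as printed needs a repair --- reversing the order of conjugation, replacing $s$ by its inverse modulo $k$ in the substitution, and/or adjusting the exponent defining $D$; note that the qualitative conclusion $(M,M)\in\mathfrak{M}$ survives any such repair, since $s\mapsto s^{-1}\bmod k$ permutes the admissible multipliers. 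A proof that defers the entire verification cannot detect, let alone resolve, this discrepancy: in a lemma whose whole content is a computation, postponing the computation with the remark that ``there is no conceptual difficulty'' leaves the argument empty at exactly the point where it in fact breaks.
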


Finally, there is an invertible transformation $A\mapsto M^{-1} A M$ that converts an $\alpha$-circulant matrix into a $\beta$-circulant matrix for certain pairs $(\alpha,\beta)$:
\begin{lma}
Let $R$ be a commutative ring, $\alpha\in R$ a unit and $\{i,j\}\subset\N$.
Let $A$ be an $\alpha^i$-circulant $(k\times k)$-matrix over $R$ and $M$ the diagonal matrix with the diagonal vector $(1,\alpha^j,\alpha^{2j},\ldots,\alpha^{(k-1)j})$.
Then $M^{-1} A M$ is an $\alpha^{i-kj}$-circulant matrix.
For $\alpha^2 = 1$ the matrix $M$ is orthogonal.
\end{lma}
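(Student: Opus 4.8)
The plan is to verify the claim by a direct computation of matrix entries, since conjugation by a diagonal matrix acts transparently on each individual entry and the only genuine subtlety is the bookkeeping of indices modulo $k$. I would avoid the commutation characterization $A T_{\alpha^i} = T_{\alpha^i} A$ here, because conjugating $T_{\alpha^{i-kj}}$ by $M$ introduces an extra scalar that obscures the picture, whereas the entrywise approach is immediate.

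First I would write down the entries of $A = \cir_{\alpha^i}(a_0,\ldots,a_{k-1})$. Reading off the definition, the entry in row $r$ and column $c$ equals $a_{(c-r)\bmod k}$ when $c\ge r$, and $\alpha^i\, a_{(c-r)\bmod k}$ when $c<r$; that is, the ``wrap-around'' entries strictly below the diagonal carry the extra factor $\alpha^i$. Since $M = \diag(1,\alpha^j,\ldots,\alpha^{(k-1)j})$ has $r$-th diagonal entry $\alpha^{rj}$, conjugation multiplies the $(r,c)$ entry of $A$ by $\alpha^{-rj}\cdot\alpha^{cj} = \alpha^{(c-r)j}$.

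The key step is to combine these two factors band by band. Fixing $d = (c-r)\bmod k$, the entries with $c\ge r$ satisfy $c-r = d$, so the $(r,c)$ entry of $M^{-1}AM$ becomes $\alpha^{dj}a_d$; set $b_d := \alpha^{dj}a_d$. The wrap-around entries with $c<r$ satisfy $c-r = d-k$, so the entry becomes $\alpha^{(d-k)j}\alpha^i a_d = \alpha^{i-kj}\,b_d$. Hence on every band the entries with $c \ge r$ equal $b_d$ while the wrap-around entries equal $\alpha^{i-kj}b_d$, which is exactly the defining pattern of $\cir_{\alpha^{i-kj}}(b_0,\ldots,b_{k-1})$. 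This yields $M^{-1}AM = \cir_{\alpha^{i-kj}}(a_0,\alpha^j a_1,\ldots,\alpha^{(k-1)j}a_{k-1})$, proving it is $\alpha^{i-kj}$-circulant. I expect this index bookkeeping — in particular recognizing that the wrap-around entries are precisely the ones that pick up the additional $\alpha^{-kj}$, thereby turning $\alpha^i$ into $\alpha^{i-kj}$ — to be the one place where care is needed; everything else is mechanical.

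Finally, for the orthogonality statement under $\alpha^2=1$: since $M$ is diagonal we have $M^T = M$, so $M^T M = M^2 = \diag(1,\alpha^{2j},\ldots,\alpha^{2(k-1)j})$. Because $\alpha^2 = 1$ gives $\alpha^{2rj} = (\alpha^2)^{rj} = 1$ for every $r$, we get $M^2 = I_k$, hence $M$ is orthogonal.
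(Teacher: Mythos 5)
Your proposal is correct: the entrywise bookkeeping (band $d=(c-r)\bmod k$, conjugation factor $\alpha^{(c-r)j}$, wrap-around entries absorbing the extra $\alpha^{-kj}$) checks out, and the orthogonality argument via $M^t M = M^2 = I_k$ under $\alpha^2=1$ is fine. The paper states this lemma without proof, as a result ``checked by a calculation,'' and your direct computation is exactly that intended calculation, with the bonus of making the generating vector $(a_0,\alpha^j a_1,\ldots,\alpha^{(k-1)j}a_{k-1})$ of $M^{-1}AM$ explicit.
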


\section{The lift of an $\alpha$-circulant matrix}
\label{section:lift}

If we want to construct all equivalence classes of double $\alpha$-circulant codes over a commutative ring $R$, it is enough to consider orbit representatives of the group action of $\mathfrak{M}$ on the set of all double $\alpha$-circulant generator matrices, or equivalently, on the set of all $\alpha$-circulant matrices.

Furthermore, we can benefit from non-trivial ideals of $R$:
Let $I$ be an ideal of $R$ with $\{0\}\neq I \neq R$, and $\bar{\ }: R\rightarrow R/I$ the canonical projection of $R$ onto $R/I$.
We set $\mathfrak{M} = \mathfrak{M}(k,R,\alpha)$ and $\bar{\mathfrak{M}} = \{(\bar{N},\bar{M}) : (N,M)\in\mathfrak{M}\}$.
It holds $\bar{\mathfrak{M}}\subseteq\mathfrak{M}(k,R/I,\bar{\alpha})$.
Let $e : R/I \rightarrow R$ be a mapping that maps each element $r + I$ of $R/I$ to a representative element $r\in R$.

\begin{dfn}
Let $A = \cir_{\bar{\alpha}}(v)$ be an $\bar{\alpha}$-circulant matrix with generating vector $v\in R/I$.
An $\alpha$-circulant matrix $B$ over $R$ is called \emph{lift} of $A$, if $\bar{B} = A$.
In this case we also say that the code generated by $(I_k\mid B)$ is a lift of the code generated by $(I_k\mid A)$.
The lifts of $A$ are exactly the matrices of the form $\cir_\alpha(e(v)) + \cir_\alpha(w)$ with $w\in I^k$.\footnote{To avoid confusion, we point out that $I^k$ denotes the $k$-fold Cartesian product $I\times \ldots \times I$ here.}
The vector $w$ is called \emph{lift vector}.
\end{dfn}

To find all double $\alpha$-circulant codes over $R$, we can run over all lifts of all double $\bar{\alpha}$-circulant codes over $R/I$.
The crucial point now is that for finding at least one representative all equivalence classes of double $\alpha$-circulant codes over $R$, it is enough to run over the lifts of a \emph{set of representatives} of the group action of $\bar{\mathfrak{M}}$ on the set of all $\bar{\alpha}$-circulant codes over $R/I$:

\begin{lma}
Let $A$ and $B$ be two $\bar{\alpha}$-circulant matrices over $R/I$ which are in the same $\bar{\mathfrak{M}}$-orbit.
Then for each lift of $A$ there is a lift of $B$ which is in the same $\mathfrak{M}$-orbit.
\end{lma}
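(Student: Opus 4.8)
The plan is to lift the transformation witnessing the $\bar{\mathfrak{M}}$-equivalence of $A$ and $B$ back to $R$ and apply it directly to the given lift of $A$.

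First, since $A$ and $B$ lie in the same $\bar{\mathfrak{M}}$-orbit and the action is $A\mapsto N^{-1}AM$, there is a pair $(\bar N,\bar M)\in\bar{\mathfrak{M}}$ with $B=\bar N^{-1}A\bar M$. By the definition of $\bar{\mathfrak{M}}$, this pair is the entrywise reduction of some $(N,M)\in\mathfrak{M}$. The point of choosing a preimage in $\mathfrak{M}$ (rather than merely working with the reduced pair) is that it gives a transformation defined over $R$ itself.

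Now let $A'$ be an arbitrary lift of $A$, so that $A'$ is $\alpha$-circulant over $R$ with $\bar{A'}=A$. I would set $B':=N^{-1}A'M$. Because $(N,M)\in\mathfrak{M}$ and $A'$ is $\alpha$-circulant, the matrix $B'$ is again $\alpha$-circulant over $R$, and by construction $B'$ lies in the same $\mathfrak{M}$-orbit as $A'$. It then remains only to verify that $B'$ is a lift of $B$, i.e. that $\bar{B'}=B$. For this I would use that the canonical projection, applied entrywise, is a ring homomorphism and hence commutes with matrix multiplication and with inversion: from $N\,N^{-1}=I_k$ one gets $\bar N\,\overline{N^{-1}}=I_k$, whence $\overline{N^{-1}}=\bar N^{-1}$. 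Therefore
\[
\bar{B'}=\overline{N^{-1}A'M}=\overline{N^{-1}}\,\bar{A'}\,\bar M=\bar N^{-1}A\bar M=B,
\]
which finishes the argument.

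The proof is essentially bookkeeping and contains no genuine obstacle. The only two points that require a little care are, first, the compatibility of reduction with inversion, $\overline{N^{-1}}=\bar N^{-1}$, which is exactly what guarantees that reducing the lifted transformation $B'=N^{-1}A'M$ reproduces the transformation $B=\bar N^{-1}A\bar M$ used over $R/I$; and second, the use of the full condition $(N,M)\in\mathfrak{M}$ (and not merely $(\bar N,\bar M)\in\bar{\mathfrak{M}}$), which is what ensures that $B'$ is $\alpha$-circulant over $R$ and thus qualifies as a lift.
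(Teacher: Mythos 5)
Your proof is correct and follows essentially the same route as the paper: both arguments pull the witnessing pair back to some $(N,M)\in\mathfrak{M}$, apply it to the given lift $A'$, and use the defining property of $\mathfrak{M}$ to see that $N^{-1}A'M$ is again $\alpha$-circulant. The only cosmetic difference is that the paper verifies the lift property by explicitly decomposing $N^{-1}A'M$ as $\cir_\alpha(e(b))+\cir_\alpha(z+w')$ with a lift vector in $I^k$, whereas you check $\overline{N^{-1}A'M}=B$ directly via the fact that entrywise reduction is a ring homomorphism satisfying $\overline{N^{-1}}=\bar{N}^{-1}$ --- a slightly more streamlined bookkeeping of the same idea.
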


\begin{proof}
Because $A$ and $B$ are in the same $\bar{\mathfrak{M}}$-orbit, there is a pair of monomial matrices $(N,M)\in\mathfrak{M}$ such that $\bar{N}^{-1} A \bar{M} = B$.
Let $a\in (R/I)^k$ be the generating vector of $A$ and $b\in (R/I)^k$ the generating vector of $B$.
Since $\overline{\cir_\alpha(e(a))} = A$ and $\overline{\cir_\alpha(e(b))} = B$ it holds $N^{-1} \cir_\alpha(e(a)) M = \cir_\alpha(e(b)) + K$, where $K\in I^{k\times k}$.
$\cir_\alpha(e(b))$ is of course $\alpha$-circulant, and $N^{-1} \cir_\alpha(e(a)) M$ is $\alpha$-circulant because of $(N,M)\in\mathfrak{M}$.
Thus, also $K$ is $\alpha$-circulant and therefore there is a $z\in I^k$ with $\cir_\alpha(z) = K$.

Now, let $w\in I^k$ be some lift vector.
$N^{-1}\cir_\alpha(w) M\in I^{k\times k}$ is $\alpha$-circulant and generated by a lift vector $w^\prime\in I^k$.
Then $N^{-1} (\cir_\alpha(e(a)) + \cir_\alpha(w)) M = \cir_\alpha(e(b)) + \cir_\alpha(z + w^\prime)$, and $z + w^\prime \in I^k$.
Therefore, the lift of $A$ by the lift vector $w$ and the lift of $B$ by the lift vector $z + w^\prime$ are in the same $\mathfrak{M}$-orbit.
\end{proof}

%To see that this is true, consider two $\bar{\alpha}$-circulant matrices $A = \cir_{\bar{\alpha}}(a)$ and $B = \cir_{\bar{\alpha}}(b)$ over $R/I$ which are in the same orbit of the action of $\bar{\mathfrak{M}}$.

%So we see that for each lift of $A$ there is a lift of $B$ which is in the same $\mathfrak{M}$-orbit.

It is not hard to adapt this approach to bordered $\alpha$-circulant codes.
One difference is an additional restriction on the appearing monomial matrices: Its diagonal part must be a scalar matrix. The reason for this is that otherwise the monomial transformations would destroy the border vectors $(\gamma\ldots\gamma)$ and $(\delta\ldots\delta)^t$.

Circulant matrices are often used to construct self-dual codes.
Thus we are interested in a fast way to generate the lifts that lead to self-dual codes.
The next section gives such an algorithm for the case that $R$ is a finite chain ring and $I$ is its minimal ideal.

\section{Self-dual double $\alpha$-circulant codes over finite commutative chain rings}
\label{section:chain_rings}
We want to investigate self-dual double $\alpha$-circulant codes.
Here we need $\alpha^2 = 1$. This is seen by denoting the rows of a generator matrix $G$ of such a code by $w_0\ldots w_{k-1}$, and by comparing the scalar products $\left<w_0,w_1\right>$ and $\left<w_1,w_2\right>$, which must be both zero.
Furthermore, given $\alpha^2 = 1$, we see that $\left<w_0,w_i\right> = \left<w_j,w_{i+j}\right>$, where $i + j$ are taken modulo $k$.
Thus $G$ generates a self-dual code if $\left<w_0,w_0\right> = 1$ and for all $j\in\{1,\ldots,\lfloor k/2\rfloor\}$ the scalar products $\left<w_0,w_j\right>$ are equal to $0$.

\begin{dfn}
A ring $R$ is called \emph{chain ring}, if its left ideals are linearily ordered by inclusion.
\end{dfn}

For the theory of finite chain rings and linear codes over finite chain rings see \cite{Honold-Landjev-2000-EJC7:R11}.

In this section $R$ will be a finite commutative chain ring, which is not a finite field, and $\alpha$ an element of $R$ with $\alpha^2 = 1$.
There is a ring element $\theta\in R$ which generates the maximal ideal $R\theta$ of $R$.
The number $q$ is defined by $R/R\theta \cong\F_q$, and $m$ is defined by $\abs{R} = q^m$.
Because $R$ is not a field, we have $m\geq 2$.
The minimal ideal of $R$ is $R \theta^{m-1}$.
$\mathfrak{M}$ is defined as in section~\ref{section:circ_code}, with with the difference that all monomial matrices $M$ should be orthogonal, that is $M M^t = I_k$.
Thus each $\mathfrak{M}$-image of a generator matrix of a self-dual code again generates a self-dual code.

Now let $I = R \theta^{m-1}$ be the minimal ideal of $R$.
As in section~\ref{section:lift} let $e : R/I \rightarrow R$ be a mapping that assignes each element of $R/I$ to a representative in $R$, now with the additional condition $e(\bar{\alpha}) = \alpha$.

We mention that if $(I_k \mid B)$ generates a double $\alpha$-circulant self-dual code over $R$, then $(I_k \mid \bar{B})$ generates a double $\bar{\alpha}$-circulant self-dual code over $R/I$.
So $B$ is among the lifts of all $\bar{\alpha}$-circulant matrices $A$ over $R/I$ such that $(I_k \mid A)$ generates a self-dual double $\bar{\alpha}$-circulant code.

Let $A = \cir_{\bar{\alpha}}(a)$ be an $\bar{\alpha}$-circulant matrix over $R/I$ such that $(I_k\mid A)$ generates a self-dual code.
So $A A^t = -I_k$, and therefore
\begin{align*}
 c_0 & := 1 + \sum_{i=0}^{k-1}e(a_i)^2 \in I\quad\mbox{and} \\
 c_j & := \sum_{i=0}^{j-1}\alpha e(a_i) e(a_{k-j+i}) + \sum_{i=j}^{k-1}e(a_i)e(a_{i-j}) \in I\quad\mbox{for all }j\in\{1,\ldots,\lfloor k/2\rfloor\}
\end{align*}
We want to find all lifts $B = \cir_\alpha(e(a)) + \cir_\alpha(w)$ of $A$ with $w\in I^k$ such that $B B^t = -I_k$.
As we have seen, this is equivalent to
\begin{align*}
 0 & = 1 + \sum_{i=0}^{k-1}(e(a_i) + w_i)^2\quad\mbox{and} \\
 0 & = \sum_{i=0}^{j-1}(e(a_i) + w_i)(\alpha e(a_{k-j+i}) + w_{k-j+i}) + \sum_{i=j}^{k-1}(e(a_i) + w_i)(e(a_{i-j})+ w_{i-j})
\end{align*}
where the second equation holds for all $j\in\{1,\ldots,\lfloor k/2\rfloor\}$.
Using $I\cdot I = 0$, we get
\begin{align*}
0 & = c_0 + 2\sum_{i=0}^{k-1}e(a_i) w_i\quad\mbox{and} \\
0 & = c_j + \sum_{i=0}^{j-1}(e(a_i) w_{k-j+i} + \alpha e(a_{k-j+i}) w_i) + \sum_{i=j}^{k-1}(e(a_i) w_{i-j} + e(a_{i-j}) w_i)
\end{align*}
This is a $R$-linear system of equations for the components $w_i\in I$ of the lift vector.
Using the fact that the $R$-modules $R/(R \theta)$ and $I$ are isomorphic, and $R/(R \theta)\cong\F_q$, this can be reformulated as a linear system of equations over the finite field $\F_q$, which can be solved efficiently.

Since $R/I$ is again a commutative chain ring, the lifting step can be applied repeatedly. Thus, starting with the codes over $\F_q$, the codes over $R$ can be constructed by $m-1$ nested lifting steps.

Again, this method can be adapted to bordered $\alpha$-circulant matrices over commutative finite chain rings.

\section{Application: Self-dual codes over $\mathbb{Z}_4$}
For a fixed length $n$ we want to find the highest minimum Lee distance $d_\Lee$ of double nega-circulant and bordered circulant self-dual codes over $\mathbb{Z}_4$.
In \cite{Gulliver-Harada-1999-DM194:129-137} codes of the bordered circulant type of length up to $32$ were investigated.

First we notice that the length $n$ must be a multiple of $8$:
Let $C$ be a bordered circulant or a double nega-circulant code of length $n$ and $c$ a codeword of $C$.
We have $0 = \left<c,c\right> = \sum_{i=0}^{n-1} c_i^2\in\Z_4$.
The last expression equals the number of units in $c$ modulo 4, so the number of units of each codeword is a multiple of $4$.
It follows that the image $\bar{C}$ of $C$ over $\Z_2$ is a doubly-even self-dual code of length $n$, which can only exist for lengths $n$ divisible by $8$.

Furthermore, it holds
\begin{equation}
\label{equ:mindist}
d_\Lee(C) \leq 2 d_\Ham(\bar{C})
\end{equation}
As a result, we only need to consider the lifts of codes $\bar{C}$ which have a sufficiently high minimum Hamming distance.

We explain the algorithm for the case of the nega-circulant codes:
In a first step, for a given length $n$ we generate all doubly-even double circulant self-dual codes over $\Z_2$.
This is done by enumerating Lyndon words of length $n$ which serve as generating vectors for the circulant matrix.
Next, we filter out all duplicates with respect to the group action of $\overline{\mathfrak{M}}$, where $\mathfrak{M}$ is the group generated by the elements given in section~\ref{section:monomial} which consist of pairs of orthogonal monomial matrices.

A variable $d$ will keep the best minimum Lee distance we already found.
We initialize $d$ with $0$.
Now we loop over all binary codes $C_{\Z_2}$ in our list, from the higher to the lower minimum Hamming distance of $C_{\Z_2}$:
If $2 d_\Ham(C_{\Z_2}) \leq d$ we are finished because of (\ref{equ:mindist}).
Otherwise, as explained in section~\ref{section:chain_rings}, we solve a system of linear equations over $\Z_2$ and get all self-dual lifts of $C_{\Z_2}$.
For these lifts we compute the minimum Lee distance and update $d$ accordingly.

Most of the computation time is spent on the computation of the minimum Lee distances.
Thus it was a crucial point to write a specialized algorithm for this purpose.
It is described in \cite{Kiermaier-Wassermann-2008}.

The results of our search are displayed in the following table.
For given length $n$, it lists the highest minimum Lee distance of a self-dual code of the respective type:
\[
\begin{array}{r|rrrrrrrr}
n                                     & 8 & 16 & 24 & 32 & 40 & 48 & 56 & 64 \\
\hline
\mbox{double nega-circulant} & 6 &  8 & 12 & 14 & 14 & 18 & 16 & 20 \\
\mbox{bordered circulant}    & 6 &  8 & 12 & 14 & 14 & 18 & 18 & 20
\end{array}
\]
We see that the results are identical for the two classes of codes, except for length $56$.
Using (\ref{equ:mindist}) there is a simple reason that for this length no double circulant self-dual code over $\Z_4$ with minimum Lee distance greater than $16$ exists: The best doubly-even double circulant self-dual binary code has only minimum Hamming distance $8$.

\section*{Acknowledgment}
This research was supported in part by Deutsche Forschungsgemeinschaft \textbf{WA 1666/4-1}.

%\bibliographystyle{plain}
%\bibliography{/home/mki/Mathematik/mki}

\end{document}